\numberwithin{equation}{section}
\newtheorem{thm}{Theorem}[section]
\theoremstyle{remark}
\newtheorem*{rem*}{Remark}
\date{}
\author{Heidi Goodson}
\address{Department of Mathematics and Statistics, Haverford College; 370 Lancaster Avenue, Haverford, PA 19041 USA}
\email{hgoodson@haverford.edu}
\title[Hypergeometric Properties of Genus 3 Generalized Legendre Curves]{Hypergeometric Properties of Genus 3 Generalized Legendre Curves}
\begin{document}

\begin{abstract}
Inspired by a result of Manin, we study the relationship between certain period integrals and the trace of Frobenius of genus 3 generalized Legendre curves. We show that both of these properties can be computed in terms of ``matching" classical and finite field hypergeometric functions, a phenomenon that has also been observed in elliptic curves and many higher dimensional varieties.
\end{abstract}

\maketitle

\section{Introduction}

The motivation for this work comes from a particular family of elliptic curves. For $\lambda\not= 0,1$ we define an elliptic curve in the Legendre family by 
$$E_\lambda : y^2 = x(x-1)(x-\lambda).$$

We compute a period integral associated to the Legendre elliptic curve given by integrating the nowhere vanishing holomorphic $1$-form $\omega=\frac{dx}{y}$ over a $1$-dimensional cycle containing $\lambda$. This period is a solution to a hypergeometric differential equation and can be expressed as the classical hypergeometric series
\begin{equation}\label{eqn:ECperiod}
\pi=\int_{0}^{\lambda} \frac{dx}{y}={}_2F_1\left(\left.\begin{array}{cc}
                \frac12&\frac12	\\
		{}&1
               \end{array}\right|\lambda\right).\end{equation}
See the exposition in \cite{Clemens} for more details on this. \\

Koike \cite[Section 4]{Koike1995} showed that, for all odd primes $p$ and $\lambda\in\mathbb Q\setminus\{0,1\}$, the trace of Frobenius for curves in this family can be expressed in terms of Greene's finite field hypergeometric function% \cite{Greene}
\begin{equation}\label{eqn:EC2F1Greene}
a_{E_\lambda}(p)=-\phi(-1)p\cdot {}_2F_1\left(\left.\begin{array}{cc}
                \phi&\phi\\
		{}&\epsilon
               \end{array}\right|\lambda\right)_{p},\end{equation}
where $\epsilon$ is the trivial character and $\phi$ is a quadratic character modulo $p$.\\ 

Note the similarity between the period and trace of Frobenius expressions: the period is given by a classical hypergeometric series whose arguments are the fractions with denominator 2 and the trace of Frobenius is given by a finite field hypergeometric function whose arguments are characters of order 2. This similarity is to be expected for curves by the following result of Manin. 
\begin{thm}\cite[Theorem 2]{Manin}\label{thm:Manin}
The rows of the Hasse-Witt matrix satisfy every differential relation which is satisfied by the elements of a basis of the space of differentials of the first kind (dual to a canonical basis of $H^1(M,\mathscr O)$) modulo the space of complete differentials.
\end{thm}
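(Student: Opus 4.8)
The plan is to recast both objects named in the statement inside a single piece of cohomology and then move the relation from one to the other by a Frobenius compatibility. Let $\{M_t\}$ be the family, $S$ its base with coordinate $t$, and $H^1_{\mathrm{dR}}(M/S)$ the relative de Rham cohomology carrying the Gauss--Manin connection $\nabla$. By Serre duality $H^1(M,\mathscr O)$ is dual to $H^0(M,\Omega^1)$, and under this duality the $p$-linear Frobenius on $H^1(M,\mathscr O)$ is adjoint to the inverse Cartier operator $\mathscr C$ on differentials of the first kind; hence, up to transpose and a Frobenius twist, the Hasse--Witt matrix is the matrix of $\mathscr C$ in a chosen basis $\omega_1,\dots,\omega_g$. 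A differential relation holding modulo complete differentials is then exactly a linear operator $L=\sum_k c_k(t)\,\nabla_{d/dt}^{\,k}$ with $L[\omega_i]=0$ in $H^1_{\mathrm{dR}}(M/S)$; at the level of forms this reads $\sum_k c_k(t)\,\nabla_{d/dt}^{\,k}\omega_i=dg_i$ for a rational function $g_i$, the exact (complete) differential $dg_i$ being precisely the ambiguity the statement allows.

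The engine of the transport is the horizontality of the Frobenius structure for $\nabla$: after the appropriate twist of the base by its own Frobenius, the de Rham Frobenius commutes with the connection, so it carries $\nabla$-flat data to $\nabla$-flat data. Since $L$ can be taken with coefficients in $\mathbb Z$, reducing mod $p$ gives $\bar L[\bar\omega_i]=0$ in $H^1_{\mathrm{dR}}$ over $\mathbb F_p$. Applying the (horizontal) operator $\mathscr C$ then shows that $\bar L$ annihilates $\mathscr C\bar\omega_i=\sum_k C_{ki}\,\bar\omega_k$, and because the matrix entries $C_{ki}$ are read off these cohomology classes the exact term $\overline{dg_i}$, which vanishes in $H^1_{\mathrm{dR}}$, contributes nothing. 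This is exactly where the ``modulo complete differentials'' hypothesis is used: it lets us discard the coboundary before extracting the matrix.

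Running parallel to this, and closer to the hypergeometric picture of the present paper, I would verify the conclusion by explicit expansion, which also supplies the mechanism in coordinates. Writing the holomorphic differentials of a (super)elliptic model $y^N=f(x,t)$ as $\omega_i=x^{a_i}f^{-b_i/N}\,dx$ and expanding the period in $t$ produces the classical hypergeometric coefficients, built from Pochhammer symbols $(b_i/N)_n$. The Cartier operator instead extracts from $f^{(p-1)/N}$ the coefficients that assemble the Hasse--Witt entries, and the congruence $\tfrac{p-1}{N}\equiv-\tfrac1N\pmod p$ identifies these with the mod-$p$ reductions of the period coefficients for $n\le\tfrac{p-1}{N}$. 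The factor of $p$ appearing in $(b_i/N)_n$ once $n$ passes $\tfrac{p-1}{N}$ kills the higher coefficients mod $p$, so the Hasse--Witt entry is genuinely the truncated period; the same vanishing makes the leading coefficient of $\bar L$ applied to that truncation drop out (the resonance one sees in the hypergeometric operator at $n=\tfrac{p-1}{N}$), so the truncated polynomial is a bona fide solution of $\bar L$ rather than merely a formal one.

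The step I expect to be the main obstacle is the semilinearity bookkeeping in the structural argument: Frobenius on $H^1(M,\mathscr O)$ is $p$-linear and $\mathscr C$ is $p^{-1}$-linear, so pushing an operator $\bar L$ with nonconstant coefficients through $\mathscr C$---while simultaneously passing through Serre duality and the Hodge filtration---forces one to control how the $p$-power twist interacts with the base derivation $\tfrac{d}{dt}$. Making this precise is exactly the content of Katz's computation of the $p$-curvature of the Gauss--Manin connection in terms of the Hasse--Witt matrix; once that compatibility is established the cohomological transport of $L$ is immediate, and the remaining points (discarding exact terms, and the resonance that turns a formal solution into the truncated Hasse polynomial) are the routine verifications illustrated by the explicit expansion above.
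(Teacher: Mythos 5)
You should first note a mismatch with the paper itself: the paper never proves this statement. It is quoted verbatim from Manin with a citation, and Sections \ref{sec:Lefschetz} and \ref{sec:HasseWitt} only build the dictionary needed to apply it (dual bases $\{\omega_i\}$, $\{h_i\}$, a diagonal Hasse--Witt matrix with entries $a_{i,p-1}$). So the relevant comparison is with Manin's original argument, which the paper's scaffolding shadows, and that argument is elementary and local rather than crystalline: (i) extracting the coefficient of $(x-x_i)^{p-1}\,dx$ from a differential commutes with $\partial/\partial\lambda$, and (ii) an exact differential $dg$ has that coefficient equal to $p$ times the coefficient of $(x-x_i)^p$ in $g$, hence $\equiv 0 \pmod p$. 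Applying (i) and (ii) to a relation $L[\omega_i]=dg_i$ transports $L$ directly onto the diagonal Hasse--Witt entries $a_{i,p-1}$ of Section \ref{sec:HasseWitt} in two lines, with no cohomological machinery. Your second paragraph --- the explicit expansion for $y^N=f(x,\lambda)$, the Pochhammer factor of $p$ killing coefficients past $\frac{p-1}{N}$, and the resonance making the truncation a genuine solution of $\bar L$ --- is precisely this argument specialized to the superelliptic case, so you have in effect rediscovered the actual engine; those remarks are correct and are what the paper later exploits via Theorem \ref{theorem:2F1congruence}.

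Your structural route, however, has a genuine gap exactly where you flag it, and as literally written one step is invalid: ``applying the (horizontal) operator $\mathscr C$ then shows that $\bar L$ annihilates $\mathscr C\bar\omega_i$'' does not follow, because $\mathscr C$ is $p^{-1}$-linear and so does not commute with multiplication by the nonconstant coefficients $c_k(t)$; the operator $L$ cannot simply be pushed through $\mathscr C$. What rescues the argument is not horizontality alone but the fact that the Frobenius twist of the base kills derivatives mod $p$: since $\frac{d}{dt}(t^p)=pt^{p-1}\equiv 0$, a local fundamental solution matrix $Y(t)$ of $L$ satisfies a Dwork-type congruence $Y(t)\equiv H(t)\,Y(t^p) \pmod p$ with $H$ the Hasse--Witt matrix, whence $L[H(t)Y(t^p)]\equiv (L[H])(t)\,Y(t^p)$ and $L[H]\equiv 0$ because $Y(t^p)$ is invertible. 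You correctly name this as the content of Katz's $p$-curvature computation but defer it, which leaves the crux of your first argument unproved; supplying either that congruence or the two-line coefficient extraction above closes the hole. One further small mislabel: under Serre duality the Frobenius on $H^1(M,\mathscr O)$ is adjoint to the Cartier operator on differentials of the first kind, not to the inverse Cartier operator, which goes the opposite direction in the Cartier isomorphism.
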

For elliptic curves, the Hasse-Witt matrix has a single entry: the trace of Frobenius. In Corollary 3.2 of \cite{GoodsonDwork2017}, we show that the matching finite field and classical ${}_2F_1$ hypergeometric expressions in Equations \ref{eqn:ECperiod} and \ref{eqn:EC2F1Greene} are congruent modulo $p$ for odd primes. This result would imply merely a congruence between the finite field hypergeometric function expression and the trace of Frobenius. The fact that Koike showed that we actually have an equality is very intriguing and leads us to wonder for what other varieties this type of equality holds.\\

Further examples of a correspondence between arithmetic properties of varieties and finite field hypergeometric functions have been observed for algebraic curves  %\cite{Swisher2016, Fuselier10, Lennon1, Mortenson2003a, Vega2011}
and for Calabi-Yau manifolds. %\cite{AhlgrenOno00a, GoodsonDwork2017,GoodsonDwork2017b, McCarthy2012b}.
For example, Fuselier \cite{Fuselier10} gave a finite field hypergeometric trace of Frobenius formula for elliptic curves with $j$-invariant $\frac{1728}{t}$, where $t\in \mathbb F_p \setminus \{0,1\}$.  Lennon \cite{Lennon1} extended this by giving a hypergeometric trace of Frobenius formula that does not depend on the Weierstrass model chosen for the elliptic curve. In \cite{AhlgrenOno00a}, Ahlgren and Ono gave a formula for the number of $\mathbb F_p$ points on a modular Calabi-Yau threefold. %Greene's hypergeometric functions were also used for modular form results in \cite{AhlgrenOno00a,FOP1, FOP2,Fuselier10,Kilbourn2006,Lennon2,Mortenson2003a,Mortenson2005}.
We extended this work in \cite{GoodsonDwork2017, GoodsonDwork2017b} by showing that the number of points on Dwork hypersurfaces over finite fields can be expressed in terms of Greene's finite field hypergeometric functions.\\
%In \cite[Theorems 1.3, 1.4]{GoodsonDwork2017} we also gave a formula for the number of points on Dwork K3 surfaces in terms of p-adic hypergeometric functions, which were defined by McCarthy in \cite{McCarthy2013}. We note that these two $p$-adic results were recently extended to higher dimensional Dwork hypersurfaces in \cite{McCarthy2016arxiv}.  \\

In this paper we examine the connection between analytic and arithmetic properties of algebraic curves. We approach this story from two directions. First, in Sections \ref{sec:Lefschetz} and \ref{sec:HasseWitt}, we develop tools that are needed to understand Manin's statement in Theorem \ref{thm:Manin}. Then, in Section \ref{sec:GenLegendreCurve}, we apply Manin's theory to the family of genus 3 generalized Legendre curves
$$C^4_{\lambda}:\; y^4=x(x-1)(x-\lambda).$$
In this example, we see Manin's theory in action since we obtain ``matching" hypergeometric expressions for the analytic and arithmetic data associated to these curves. We begin with background information in Section \ref{sec:HGF}.

\section{Preliminaries}
\label{sec:HGF}
In this section we recall definitions and properties of ${}_2F_1$ classical and finite field hypergeometric functions. See, for example, \cite{GoodsonDwork2017, Greene, Slater1966} for extensions of this work to ${}_{n+1}F_n$ generalized hypergeometric functions.\\

We define the classical hypergeometric series by
\begin{equation}\label{eqn:classicalHGF}
 {}_{2}F_{1}\left(\left.\begin{array}{cc}
                a&b\\
		{}&c
               \end{array}\right|x\right) = \displaystyle\sum_{k=0}^{\infty}\dfrac{(a)_k(b)_k}{(c)_kk!}x^k,
\end{equation}
where $(\alpha)_0=1$ and $(\alpha)_k=\alpha(\alpha+1)(\alpha+2)\ldots(\alpha+k-1)$.  \\

This series is a solution to the so-called hypergeometric differential equation
\begin{equation}\label{eqn:hypdiffeq}
    -abF+(c-(a+b+1)x)\frac{d}{dx}F+x(1-x)\frac{d^2}{dx^2}F=0
\end{equation}
(see, for example, \cite[Section 1.2]{Slater1966}).\\

Unless either $a$ or $b$ is a negative integer, classical hypergeometric series have an infinite number of terms. In some cases, for example when considering congruences or supercongruences, we may only need to consider a finite number of these terms. For a positive integer, $m$, we define the hypergeometric series truncated at $m$ to be 
\begin{equation}\label{eqn:truncHGF}
 {}_{2}F_{1}\left(\left.\begin{array}{cc}
                a&b\\
		{}&c
               \end{array}\right|x\right)_{\text{tr}(m)} = \displaystyle\sum_{k=0}^{m-1}\dfrac{(a)_k(b)_k}{k!(c)_k}x^k.
\end{equation}

In his 1987 paper \cite{Greene}, Greene introduced a finite field, character sum analogue of classical hypergeometric series. Let $\mathbb F_q$ be the finite field with $q$ elements, where $q$ is a power of an odd prime $p$. If $\chi$ is a multiplicative character of $\widehat{\mathbb F_q^{\times}}$, extend it to all of $\mathbb F_q$ by setting $\chi(0)=0$. For any two characters $A,B$ of $\widehat{\mathbb F_q^{\times}}$ we define the normalized Jacobi sum by

\begin{equation}\label{eqn:normalizedjacobi}
 \binom{A}{B}:=\frac{B(-1)}{q}\sum_{x\in\mathbb F_q} A(x)\overline B(1-x) = \frac{B(-1)}{q}J(A,\overline{B}),
\end{equation}
where $J(A,B)=\sum_{x\in \mathbb F_q} A(x)B(1-x)$ is the usual Jacobi sum.\\

For any positive integer $n$ and characters $A,\; B,\; C$ in $\widehat{\mathbb F_q^{\times}}$, Greene defined the finite field hypergeometric function ${}_{2}F_1$ over $\mathbb F_q$ by
\begin{equation}\label{eqn:HGFdef}
 {}_{2}F_{1}\left(\left.\begin{array}{cc}
                A&B\\
		{} &C
               \end{array}\right|x\right)_q = \displaystyle\frac{q}{q-1}\sum_{\chi}\binom{A\chi}{\chi}\binom{B\chi}{C\chi}\chi(x).
\end{equation}
An alternate definition, which is in fact Greene's original definition, is given by
\begin{equation}\label{eqn:2F1def}
 {}_{2}F_{1}\left(\left.\begin{array}{cc}
                A&B\\
		{} &C
               \end{array}\right|x\right)_q = \epsilon(x)\frac{BC(-1)}{q}\sum_yB(y)\overline{B}C(1-y)\overline{A}(1-xy).
\end{equation}

Greene's finite field hypergeometric functions were defined independently of those by Katz \cite{Katz1990} and McCarthy \cite{McCarthy2012c}, though relations between them have been demonstrated in \cite{McCarthy2012c}.\\

Greene shows that defining finite field hypergeometric functions in this way leads to many transformation properties that mirror those of classical series. For example, classical ${}_2F_1$ hypergeometric series satisfy the following identity \cite[p. 48]{Slater1966}
\begin{align*}
    {}_{2}F_1\left(\left.\begin{array}{cccc}
                -m&b\\
		{}&c
               \end{array}\right|x\right)&=\frac{(b)_m}{(c)_m}(-x)^m {}_{2}F_1\left(\left.\begin{array}{cccc}
                -m&1-c-m\\
		{}&1-b-m
               \end{array}\right|\frac1x\right)
\end{align*}
The analogous statement for finite field hypergeometric functions is as follows

\begin{thm}\cite[Theorem 4.2, ii]{Greene}\label{thm:Greenetransform}
 For characters $A, B, C$ of $\mathbb F_q$ and $x\in\mathbb F_q^\times$,
 \begin{align*}
     {}_{2}F_{1}\left(\left.\begin{array}{cc}
                A&B\\
		{} &C
               \end{array}\right|x\right)_q &=ABC(-1)\overline A(x){}_{2}F_{1}\left(\left.\begin{array}{cc}
                A&A\overline C\\
		{} &A\overline B
               \end{array}\right|\frac{1}{x}\right)_q 
 \end{align*}
\end{thm}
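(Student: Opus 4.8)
The plan is to work directly from the character-sum definition \eqref{eqn:HGFdef} and to prove the equivalent statement that the right-hand ${}_2F_1$ equals $ABC(-1)A(x)$ times the left-hand one; this is equivalent to the claimed identity after inverting the scalar $ABC(-1)\overline A(x)$, whose inverse is $ABC(-1)A(x)$ (the character value $ABC(-1)$ is $\pm 1$, hence self-inverse). The essential tool is the evaluation of the normalized Jacobi symbol in terms of Gauss sums: writing $g(\chi)$ for the Gauss sum of a character $\chi$, one has $\binom{P}{Q} = g(P)/\bigl(g(Q)g(P\overline Q)\bigr)$ whenever the characters involved are nontrivial, together with the reflection formula $g(\rho)g(\overline\rho) = \rho(-1)q$ for $\rho \neq \epsilon$.

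First I would substitute these into both functions. On the left the two symbols become $\binom{A\chi}{\chi} = g(A\chi)/\bigl(g(\chi)g(A)\bigr)$ and $\binom{B\chi}{C\chi} = g(B\chi)/\bigl(g(C\chi)g(B\overline C)\bigr)$, while on the right $\binom{A\overline C\chi}{A\overline B\chi} = g(A\overline C\chi)/\bigl(g(A\overline B\chi)g(B\overline C)\bigr)$, the factor $\binom{A\chi}{\chi}$ being common to both. After pulling out the common constant $1/\bigl(g(A)g(B\overline C)\bigr)$, each ${}_2F_1$ reduces to $\frac{q}{q-1}\sum_{\chi}$ of a ratio of four Gauss sums times $\chi(x)$ on the left and times $\chi(1/x) = \overline\chi(x)$ on the right.

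The key step is the reindexing $\chi \mapsto \overline A\,\overline\chi$ in the right-hand sum. Applying $g(\rho\overline\chi) = (\rho\overline\chi)(-1)\,q/g(\overline\rho\chi)$ to each of the four Gauss sums inverts the ratio, and with this particular shift by $\overline A$ the four arguments land exactly on the numerator $g(A\chi)g(B\chi)$ and denominator $g(\chi)g(C\chi)$ of the left-hand summand. Collecting the powers of $q$, which cancel as $q^2/q^2$, and the characters evaluated at $-1$ produces the scalar $C(-1)/\bigl(AB(-1)\bigr) = ABC(-1)$, while the argument factor becomes $\overline D(x)\chi(x) = A(x)\chi(x)$ with $D = \overline A$, supplying the needed $A(x)$. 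This gives RHS $= ABC(-1)A(x)\cdot{}$LHS, which is the theorem.

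The main obstacle is that the clean Gauss-sum formula for $\binom{P}{Q}$ fails when one of the characters $\chi, A\chi, C\chi, A\overline B\chi,\ldots$ is trivial, so the boundary characters in the sum over $\chi$ must be treated separately; these finitely many degenerate terms are precisely what Greene's binomial-coefficient identities (the analogues of Theorem 2.6 in \cite{Greene}) package with their correction terms, and checking that they too respect the reindexing — equivalently, running the whole argument through those identities rather than through raw Gauss sums — is where the bookkeeping must be done with care. One must also use the hypothesis $x \in \mathbb F_q^\times$ exactly where $1/x$ is formed and $\chi(1/x) = \overline\chi(x)$ is invoked, matching the role of the $\epsilon(x)$ factors that appear in the alternate definition \eqref{eqn:2F1def}.
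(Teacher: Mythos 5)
Note first that the paper itself gives no proof of this statement---it is quoted verbatim from Greene with a citation---so the only meaningful comparison is with Greene's original argument. Your skeleton is exactly the right one, and it is essentially Greene's: starting from the $\chi$-sum definition \eqref{eqn:HGFdef}, the substitution $\chi\mapsto\overline A\,\overline\chi$ in the right-hand sum is the key move, and your generic bookkeeping is correct (I verified it: the common constant $1/\bigl(g(A)g(B\overline C)\bigr)$, the sign $C(-1)\,\overline{AB}(-1)=ABC(-1)$ after the four reflection formulas, and the argument factor $(\overline A\,\overline\chi)(1/x)=A(x)\chi(x)$ all come out as you say).

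The gap is the degenerate characters, which you flag but never handle---and here that is not a formality, it is the entire content of the theorem's exactness. Transformations in Greene's framework frequently \emph{do} acquire delta-function correction terms (his $x\mapsto 1-x$ transformation is the standard example), and those corrections arise precisely from the finitely many $\chi$ at which the Gauss-sum factorization of $\binom{P}{Q}$ breaks down; a computation valid only for generic $\chi$ cannot distinguish the stated equality from one holding up to such corrections, so stopping where you stop leaves the theorem unproved. The repair you gesture at in your last paragraph is the right one, and it dissolves the problem completely rather than requiring case analysis: the normalized Jacobi binomial of \eqref{eqn:normalizedjacobi} satisfies the \emph{exact} symmetry $\binom{P}{Q}=PQ(-1)\binom{\overline Q}{\overline P}$ for \emph{all} characters $P,Q$, trivial cases included, since $\binom{P}{Q}=\frac{Q(-1)}{q}J(P,\overline Q)$, $\binom{\overline Q}{\overline P}=\frac{P(-1)}{q}J(\overline Q,P)$, and the Jacobi sum is symmetric in its arguments. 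Running your own reindexing through this identity instead of through raw Gauss sums gives, term by term with no boundary cases, $\binom{\overline\chi}{\overline A\overline\chi}=A\chi^2(-1)\binom{A\chi}{\chi}$ and $\binom{\overline C\overline\chi}{\overline B\overline\chi}=BC\chi^2(-1)\binom{B\chi}{C\chi}$, so the total sign is $ABC(-1)\chi^4(-1)=ABC(-1)$ and the argument factor is $A(x)\chi(x)$, which is the theorem after multiplying by $ABC(-1)\overline A(x)$. So: your approach as executed is incomplete at exactly the point you identified, but rerouted through the exact binomial symmetry it closes in a few lines and coincides with Greene's proof.
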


Note that these identities can be generalized to ${}_{n+1}F_n$ classical and finite field hypergeometric functions for $n>1$. See Section 4 of \cite{Greene} for other transformation and summation theorems.\\

In addition to having analogous transformation properties, ``matching" classical and finite field hypergeometric functions have also been shown to be congruent modulo $p$ in many cases. The following theorem will be referenced in our discussion in Section \ref{sec:periodpointcount}.
\begin{thm}\cite[Theorem 3.1]{GoodsonDwork2017}\label{theorem:2F1congruence}
 Let $m$ and $d$ be integers with $1\leq m<d$. If $p\equiv 1\pmod d$ and $T$ is a generator for the character group $\widehat{\mathbb F_p^{\times}}$ then, for $x\not=0$,
 \begin{equation*}
   {}_{2}F_{1}\left(\left.\begin{array}{cc}
                \tfrac{m}{d}&\tfrac{d-m}{d}\\
		{} &1
               \end{array}\right|x\right)_{\text{tr}(p)} \equiv  -p\hspace{.05in}{}_{2}F_{1}\left(\left.\begin{array}{cc}
                T^{mt}&\overline T^{mt}\\
		{} &\epsilon
               \end{array}\right|x\right)_p \pmod p,
 \end{equation*}
where $t=\tfrac{p-1}{d}$.
\end{thm}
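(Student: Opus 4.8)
The plan is to reduce both sides to honest polynomials in $x$ with coefficients in $\mathbb F_p$ and then compare them coefficient by coefficient. The key preliminary observation is that, since $d\mid(p-1)$ we have $dt=p-1\equiv-1\pmod p$, hence $d^{-1}\equiv-t$ and therefore $\tfrac md\equiv-mt$, $\tfrac{d-m}d\equiv-(d-m)t\pmod p$. Feeding this into the Pochhammer symbols gives, for $0\le k\le p-1$,
\[
\left(\tfrac md\right)_k\equiv\prod_{i=0}^{k-1}(i-mt)\equiv(-1)^k\,k!\,\binom{mt}{k}\pmod p,
\]
and likewise $\left(\tfrac{d-m}d\right)_k\equiv(-1)^k k!\binom{(d-m)t}{k}$. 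Multiplying and cancelling $(k!)^2$ (a unit for $k<p$) yields
\[
\frac{\left(\tfrac md\right)_k\left(\tfrac{d-m}d\right)_k}{(k!)^2}\equiv\binom{mt}{k}\binom{(d-m)t}{k}\pmod p.
\]
In particular the right-hand binomials vanish for $k>\min(mt,(d-m)t)$, so modulo $p$ the truncated series on the left is already a polynomial of degree $\min(mt,(d-m)t)$; this is exactly the degree I will produce from the finite field side, which explains why truncation at $p$ is the right cutoff.

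For the finite field hypergeometric function I would start from Greene's original definition, Equation \ref{eqn:2F1def}, rather than the Jacobi-sum form, since it is a single character sum over $y$. With $A=T^{mt}$, $B=\overline T^{mt}$, $C=\epsilon$ and $x\ne0$ this reads
\[
-p\,{}_2F_1\!\left(\left.\begin{array}{cc}T^{mt}&\overline T^{mt}\\ {}&\epsilon\end{array}\right|x\right)_p=-\,T^{-mt}(-1)\sum_{y}T^{-mt}(y)\,T^{mt}(1-y)\,T^{-mt}(1-xy).
\]
Taking $T$ to be the Teichm\"uller character $\omega$, the defining congruence $\omega(z)\equiv z\pmod{\mathfrak p}$ for a prime $\mathfrak p\mid p$ of $\mathbb Z[\zeta_{p-1}]$ turns each character value into an honest monomial: using $-mt\equiv(d-m)t\pmod{p-1}$, the summand becomes $y^{(d-m)t}(1-y)^{mt}(1-xy)^{(d-m)t}$. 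Crucially this polynomial already vanishes at $y=0,1,x^{-1}$, precisely the points where the characters vanish, so the character sum is congruent mod $\mathfrak p$ to the sum of this polynomial over all of $\mathbb F_p$, with no extra bookkeeping for the degenerate terms.

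I would then expand $(1-y)^{mt}$ and $(1-xy)^{(d-m)t}$ binomially and evaluate $\sum_{y\in\mathbb F_p}y^{e}$, which equals $-1$ when $(p-1)\mid e$ with $e>0$ and $0$ otherwise. Because $(d-m)t<p-1$ while $mt+(d-m)t=p-1$, the exponent $(d-m)t+i+l$ lands in a single admissible class $i+l=mt$; this collapses the double sum and leaves the coefficient of $x^k$ equal to $\binom{mt}{k}\binom{(d-m)t}{k}$ up to sign. Accounting together for the overall factor $-1$, the value $-1$ of the surviving power sum, the prefactor $T^{-mt}(-1)=(-1)^{mt}$, and the sign $(-1)^{mt}$ from the binomial expansions, every sign cancels to $+1$, so the finite field side reduces mod $\mathfrak p$ to $\sum_k\binom{mt}{k}\binom{(d-m)t}{k}x^k$, matching the classical side term by term.

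The step I expect to be most delicate is not any single computation but the surrounding bookkeeping: descending a congruence a priori valid mod $\mathfrak p$ in $\mathbb Z[\zeta_{p-1}]$ to the asserted congruence mod $p$ (legitimate since both sides are $p$-adically rational and $\mathfrak p\cap\mathbb Z=p\mathbb Z$), and confirming that exactly one residue class survives the power-sum evaluation, which is where the hypotheses $1\le m<d$ and $d\mid p-1$ enter. A second point requiring care is the passage from a general generator $T=\omega^c$ to $\omega$ itself: the argument above replaces $m$ by $cm\bmod d$ throughout, so one must pin down the normalization under which the parameters remain $\tfrac md$ and $\tfrac{d-m}d$. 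I prefer this elementary Teichm\"uller reduction to the alternative route through Gauss sums and the Gross--Koblitz formula, which would reach the same identity but carries heavier machinery.
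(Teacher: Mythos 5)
Your proof is correct, but note that the paper itself never proves Theorem \ref{theorem:2F1congruence}: it is imported from \cite{GoodsonDwork2017}, where the argument runs through the standard machinery of this literature --- expanding Greene's function via Gauss/Jacobi sums and invoking the Gross--Koblitz formula together with $p$-adic Gamma function congruences, in the style of Mortenson's supercongruence proofs. Your Teichm\"uller reduction is a genuinely different and more elementary route, and every step checks out: the Pochhammer congruence $\left(\tfrac md\right)_k\equiv(-1)^k k!\binom{mt}{k}\pmod p$ (valid for all $k\le p-1$, with the product vanishing once $i$ hits $mt$); the termwise passage from character values to monomials, which is clean precisely because $T^{\pm mt}$ is nontrivial (as $0<mt<p-1$), so both the character product and the polynomial $y^{(d-m)t}(1-y)^{mt}(1-xy)^{(d-m)t}$ vanish at $y=0,1,x^{-1}$; the power-sum evaluation, where $0<(d-m)t\le e\le (p-1)+(d-m)t<2(p-1)$ forces the single class $i+l=mt$; and the sign cancellation $-(-1)^{mt}\cdot\bigl(-(-1)^{mt}\bigr)=+1$. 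The normalization issue you flag at the end deserves emphasis, because it is a subtlety in the statement itself, not just in your proof: for a fixed prime $\mathfrak p$ above $p$ the congruence genuinely depends on the generator once $d\ge5$. For instance with $p=11$, $d=5$, $m=1$, $t=2$, taking $T=\omega$ gives linear coefficient $\binom{2}{1}\binom{8}{1}=16\equiv5\pmod{11}$ on the finite field side, while $T=\omega^3$ gives $\binom{6}{1}\binom{4}{1}=24\equiv2\pmod{11}$; the truncated classical series has coefficient $5$. So the theorem must be read with the congruence taken modulo the prime for which $T$ reduces to the identity map on residues (equivalently, $T$ is the Teichm\"uller lift under the chosen embedding), and your proof for $T=\omega$ then covers the general statement by relabeling $m$ modulo $d$. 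Comparing the two approaches: the Gross--Koblitz route generalizes uniformly to ${}_{n+1}F_n$ and is the natural setting for sharpening to supercongruences mod $p^2$, whereas your argument is self-contained, makes the truncation point $\mathrm{tr}(p)$ conceptually transparent (both sides are visibly polynomials of degree $\min(mt,(d-m)t)$ mod $p$), and isolates exactly where the hypotheses $1\le m<d$ and $d\mid p-1$ are used.
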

%Specializing this to the $m=1$ and $d=2$, allows us to conclude that the Legendre elliptic curve hypergeometric expressions in Equations \ref{eqn:EC2F1Greene} and \ref{eqn:ECperiod} are congruent modulo $p$.
This builds on supercongruence results of Mortenson \cite{Mortenson2003a, Mortenson2005} by considering hypergeometric functions evaluated away from 1, though this result holds mod $p$ instead of $p^2$. Further congruences and supercongruences between classical and finite field hypergeometric functions can be found in \cite{Swisher2016,GoodsonDwork2017}.

\section{Using the Lefschetz Number}\label{sec:Lefschetz}
The Lefschetz number associated to a map from a manifold to itself essentially keeps track of the number of fixed points of the map. Let $f:M\rightarrow M$ be a differentiable map on the compact differentiable manifold $M$ such that the graph of $f$ meets the diagonal transversely. %, i.e. not tangentally. 
Then the Lefschetz number $L(f)$ can be computed in two ways:
\begin{equation}\label{Lefschetz}
 L(f)=\sum_{p\in M} \sigma_p(f) = \sum_{n=0}^\infty (-1)^n\text{tr}[f^*:H^n(M,\mathbb C) \rightarrow H^n(M,\mathbb C)],
\end{equation}
where 
\[ \sigma_p(f)=\left\{
  \begin{array}{rl}
    0 & : f(p)\not=p\\
    \pm 1 & : (\text{graph } f) \text{ meets diagonal with positive/negative orientation}.
  \end{array}
\right.\]

When the map $f$ is the Frobenius map on a curve, then $L(f)$ measures the number of points on the curve over a finite field $\mathbb F_q$. In this field we have $(x,y)=(x^q, y^q)$, so that any point on the curve will be a fixed point of the map.\\

We will rewrite both expressions for the Lefschetz number in order to show the relationship between the period associated to a curve and its point count.  We follow the work of Clemens \cite[Chapter 2]{Clemens}.\\ 

We start by rewriting $\sum_{p\in M} \sigma_p(f)$. Let $J_p(f)$ be the Jacobian of $f$ at the point $p$. The transversality of $f$ at $p$ implies that $(\text{identity}- f)$ has maximal rank at $p$. This is the rank of $I-J_p(f)$ at the point $p$, which is a matrix that gives us information about the orientation of the map $f$. Thus, we can write $\sigma_p(f)=\text{sign }\det{(I-J_p(f))}$. Clemens shows that this determinant can also be expressed as 
\begin{equation}\label{eqn:DetJp}
    \det{(I-J_p(f))} = \sum_{r=0}^n(-1)^r\text{tr}(\wedge^rJ_p(f))
\end{equation}
so that we can write 
$$\sum_{p\in M} \sigma_p(f)= \sum_{p,r}(-1)^r\frac{\text{tr}(\wedge^rJ_p(f))}{|\det{(I-J_p(f))} |}.$$

Denote the restrictions of $J_p(f)$ to type (1,0) (holomorphic) and type (0,1) (anti-holomorphic) parts of $J_p(f)$ by $J_p'(f)$ and $J_p''(f)$, respectively. Clemens notes that if the manifold $M$ is a K{\"a}hler manifold then we can replace the de Rham complex by the Dolbeault complex on $M$. Thus Equation \ref{Lefschetz} becomes
\begin{equation}\label{Lefschetz1}
 \sum_{p,r}(-1)^r\frac{\text{tr}(\wedge^rJ''_p(f))}{|\det{(I-J_p(f))} |} = \sum_{n=0}^\infty (-1)^n\text{tr}[f^*|_{H^n(M,\mathscr O)}],
\end{equation}

We also have that  
$$\sum_{r}(-1)^r\text{tr}(\wedge^rJ''_p(f)) = \det{(I-J_p''(f))},$$
as we did in Equation \ref{eqn:DetJp}, and $$\det{(I-J_p(f))}=\det{(I-J_p'(f))}\det{(I-J_p''(f))}.$$
Hence,
\begin{eqnarray*}
 \sum_{p,r}(-1)^r\frac{\text{tr}(\wedge^rJ''_p(f))}{|\det{(I-J_p(f))} |}&=&\sum_{p}\frac{\det{(I-J_p''(f))}}{|\det{(I-J_p'(f))}\det{(I-J_p''(f))}|}\\
								      &=&\sum_{p}\frac{1}{|\det{(I-J_p'(f))}|}\\
								      &=&\sum_{p \text{ fixed}}\frac{1}{|\det{(I-J_p'(f))}|}.
\end{eqnarray*}
Thus, $\sum\sigma_p(f)$ can be expressed in terms of the holomorphic part of $J_p(f)$.\\

We specialize to the case where $f$ is the Frobenius map and the manifold is an algebraic curve $C$. Note that $J_p(f)=0$ since $d(x^p)/dx = px^{p-1}=0$ in $\mathbb F_p$. Hence, $|\det{(I-J_p'(f))}|=1$ and $\sum_{p\in C} \sigma_p(f)=$ the number of fixed points of $f$. Since $f$ is a map on $C$ and $x^p=x$ if and only if $x\in\mathbb F_p$, the number of fixed points of $f$ will be exactly the number of $\mathbb F_p$-points on $C$ plus the point at infinity. Thus, $$\sum_{p\in C} \sigma_p(f)= 1+\text{ the number of $\mathbb F_p$-points on }C.$$

We now rewrite the expression $\sum_{n=0}^\infty (-1)^n\text{tr}[f^*|_{H^n(C,\mathscr O)}]$ for the case we are considering. Recall that, in general,  $H^n(M,\mathscr O)=0$ whenever $n>\dim(M)$. Equation \ref{Lefschetz1} then becomes
\begin{equation*}
 1+\text{ the number of $\mathbb F_p$-points on $C$}= 1-\text{tr}[f^*|_{H^1(C,\mathscr O)}],
\end{equation*}
i.e.
\begin{equation}\label{eqn:PointCount}
 \text{ the number of $\mathbb F_p$-points on $C$}= -\text{tr}[f^*|_{H^1(C,\mathscr O)}].
\end{equation}

In Section \ref{sec:HasseWitt} we will see that the right-hand-side of this equation is related to the periods of an algebraic curve.

\section{The Hasse-Witt Matrix}\label{sec:HasseWitt}
In this section, we piece together the work of \cite{Clemens} and \cite{Manin}. Let $g$ be the genus of the algebraic curve $C$. The Hasse-Witt matrix of $C$ is the $g\times g$ matrix of the Frobenius map with respect to a basis of regular differentials of the first kind. Thus, the trace of this matrix will give us $\text{tr}[f^*|_{H^1(C,\mathscr O)}]$ (trace is independent of basis). In this section we aim to describe the Hasse-Witt matrix in greater detail.\\

The genus $g$ of the curve is equal to both the dimension of the space $H^1(C,\mathscr O)$ of 1-cycles and the dimension of the space of regular 1-forms on $C$. We will choose dual bases for these two spaces (dual with respect to a residue pairing).  Let $P_1, \ldots, P_g$ be a set of distinct points on $C$ such that the divisor $D=\sum P_i$ is nonspecial. It is noted by Manin \cite[Section 1.5]{Manin} that we may identify $H^1(C,\mathscr O)$ with the space of functions that have poles at worst at the points $P_1, \ldots, P_g$. Thus, we can choose a basis $h_1, \ldots, h_g$ for $H^1(C,\mathscr O)$, where each $h_i$ is a function with a simple pole at $P_i$ and no other poles (except at infinity). Thus, $h_i$ has Taylor series expansion,
$$h_i=\frac{1}{x-x_i}+\sum_{l\geq0}c_{i,l}(x-x_i)^l,$$
where $P_i=(x_i,y_i)$ is a point on $C$ as above \cite[Section 2.12]{Clemens}. Similarly, to each point $P_i=(x_i,y_i)$ we can associate a differential $\omega_i$, which is to say we can write $\omega_i$ locally at the point $P_i$:
$$\omega_i=dx+\sum_{r\geq1}a_{i,r}(x-x_i)^r dx.$$
The bases $\{\omega_i\}_i$ and $\{h_i\}_i$ are dual with respect to the pairing ${(\omega_i,h_j)=\text{Res}(h_j\omega_i,P_i)}$, the residue at $P_i$, since 
\[\text{Res}(h_j\omega_i,P_i)=\left\{
  \begin{array}{rl}
    1 & \text{if } i=j\\
    0 &  \text{if }i\not=j.
  \end{array}
\right.\]
Let $K$ be the matrix of scalar products $\left[(\omega_i, h_j)\right]$. We can write the Hasse-Witt matrix $H$ as
$$H=KH=\left[(\omega_i, f^*h_j)\right],$$
where the map $f^*$ sends each $h_i(x)$ to
$$h_i(x^p)=\frac{1}{(x-x_i)^p}+\sum_{l\geq0}b_{i,l}(x-x_i)^{pl}.$$
Thus, $\text{tr}[f^*|_{H^1(C,\mathscr O)}]=\sum_{i=1}^g (\omega_i, f^*h_i)$. In fact we can say even more about this matrix. Note that if $i\not=j$ then
$$(\omega_i, f^*h_j)=\text{Res}(f^*h_j\omega_i,P_i)=0$$
since $h_j$, and therefore $f^*h_j$, is holomorphic at the point $P_i$. Thus, the Hasse-Witt matrix is a diagonal matrix with this choice of basis.\\

These diagonal entries can be expressed in terms of coefficients in the expansions of the differentials. We have that $\text{Res}(f^*h_j\omega_i,P_i)$ is the coefficient of ${1}/{(x-x_i)}$ in the expansion
$$f^*h_i\omega_i=\left(\frac{1}{(x-x_i)^p}+\sum_{l\geq0}b_{i,l}(x-x_i)^{pl}\right)\left(1+\sum_{r\geq1}a_{i,r}(x-x_i)^r\right)dx.$$
Thus, $(\omega_i, f^*h_i)=a_{i,p-1}$, so that $\text{tr}[f^*|_{H^1(C,\mathscr O)}]=\sum_{i=1}^g a_{i,p-1}$.\\

\section{Generalized Legendre Curves}\label{sec:GenLegendreCurve}

We now apply this theory to a particular family of curves. We look at a specific case of  generalized Legendre curves given by
$$C^4_{\lambda}: y^4=x(x-1)(x-\lambda).$$
When viewed as a projective curve, it is given by the homogeneous equation
$$Y^4=ZX(X-Z)(X-\lambda Z)$$
by sending $(x,y) \to (X/Z,Y/Z)$. When written in this form we see that the curve is nonsingular in $\mathbb P^2$. Thus, by a well-known genus formula for nonsingular curves, the genus of $C^4_{\lambda}$ is $g=\frac{(4-1)(4-2)}{2}=3$.

\subsection{Period Computation}\label{subsec:Period}
In this section we give formulas for certain period integrals associated to genus 3 generalized Legendre curves.  The periods we are interested in are obtained by choosing dual bases of the space of holomorphic differentials and the space of cycles ${H^1(C_\lambda^4,\mathscr O)}$ and integrating the differentials over each cycle. Note that Barman and Kalita developed a hypergeometric formula for one of these period integrals in \cite{Barman2012} using trigonometric substitution.\\

Using the method described in \cite[Section 2]{Archinard2002} yields the following basis for the space of differentials
$$\left\{\omega_1=\frac{x dx}{y^3},\; \omega_2=\frac{dx}{y^2},\; \omega_3=\frac{dx}{y^3}\right\}.$$

\begin{thm}\label{thm:GenLegendreCurvePeriod}
The periods of the genus 3 generalized Legendre curve are
$$\pi_1={}_2F_1\left(\left.\begin{array}{cc}
                1/4&3/4\\
		{}&1/2
               \end{array}\right|\lambda\right), \; \pi_2={}_2F_1\left(\left.\begin{array}{cc}
                1/2&1/2\\
		{}&1
               \end{array}\right|\lambda\right),\; \pi_3={}_2F_1\left(\left.\begin{array}{cc}
                3/4&5/4\\
		{}&3/2
               \end{array}\right|\lambda\right).$$
\end{thm}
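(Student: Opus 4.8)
The plan is to evaluate each period directly by reducing it to a Beta--type (Euler) integral and expanding. Writing each basis differential uniformly as $\omega_i=x^{j_i}\bigl[x(x-1)(x-\lambda)\bigr]^{-k_i/4}\,dx$, we have $(j_1,k_1)=(1,3)$, $(j_2,k_2)=(0,2)$, and $(j_3,k_3)=(0,3)$. For the $1$-cycle joining the branch points $0$ and $\lambda$ I would apply the substitution $x=\lambda t$ (equivalently the trigonometric substitution $x=\lambda\sin^2\theta$ used by Barman and Kalita), which sends the path to $[0,1]$ and converts the factors $x$, $x-1$, $x-\lambda$ into powers of $t$, $1-\lambda t$, and $1-t$ up to constants and roots of unity. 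After collecting the resulting powers of $\lambda$ and the phase factors coming from the branch of the radical, each integrand takes the standard shape $t^{\alpha-1}(1-t)^{\beta-1}(1-\lambda t)^{-\gamma}$ with $\alpha,\beta,\gamma$ read off directly from $j_i$ and $k_i$.

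The second step is to invoke Euler's integral representation in the form $\int_0^1 t^{\alpha-1}(1-t)^{\beta-1}(1-\lambda t)^{-\gamma}\,dt=B(\alpha,\beta)\,{}_2F_1(\gamma,\alpha;\alpha+\beta\,|\,\lambda)$, which one verifies on the spot by expanding $(1-\lambda t)^{-\gamma}$ with the binomial series, integrating term by term, and recognizing the Beta values as the Pochhammer quotient $(\gamma)_k(\alpha)_k/\bigl((\alpha+\beta)_k\,k!\bigr)$. This produces a classical hypergeometric series with parameters $(a,b,c)=(\gamma,\alpha,\alpha+\beta)$ as in \eqref{eqn:classicalHGF}. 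For $\omega_2$ the exponents are $\alpha=\beta=\gamma=\tfrac12$ with $\alpha+\beta=1$, giving exactly ${}_2F_1(\tfrac12,\tfrac12;1\,|\,\lambda)$ and reproducing the Legendre period \eqref{eqn:ECperiod}. The remaining factor $B(\alpha,\beta)$ is the value of the integral at $\lambda=0$ and is absorbed into the normalization of the period; this is the same $\Gamma$-factor suppressed in \eqref{eqn:ECperiod}, where the literal integral equals $\pi\cdot{}_2F_1(\tfrac12,\tfrac12;1\,|\,\lambda)$.

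The main obstacle is the half-integer bookkeeping for $\omega_1$ and $\omega_3$, where $c=\tfrac12$ and $c=\tfrac32$, so that the period must be pinned to the solution of the hypergeometric equation \eqref{eqn:hypdiffeq} that is holomorphic and nonvanishing at $\lambda=0$ rather than to its companion solution, which differs by a factor $\lambda^{1-c}=\lambda^{\pm 1/2}$. Concretely, one must select a cycle among the branch points $0,1,\lambda,\infty$ (or a closed Pochhammer-type loop that removes the branch ambiguity) for which the resulting Euler integral carries no fractional power of $\lambda$; for such a contour the exponents collapse to $(\gamma,\alpha,\alpha+\beta)=(\tfrac34,\tfrac14,\tfrac12)$ and $(\tfrac34,\tfrac54,\tfrac32)$, yielding $\pi_1$ and $\pi_3$ as stated (using ${}_2F_1(\tfrac34,\tfrac14;\tfrac12)={}_2F_1(\tfrac14,\tfrac34;\tfrac12)$). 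To make this selection rigorous and independent of branch choices, I would as a check derive the Picard--Fuchs equation for each $\pi_i(\lambda)$ by differentiating under the integral sign and reducing the resulting differentials modulo exact forms on $C^4_\lambda$; confirming that it is precisely \eqref{eqn:hypdiffeq} with the claimed $(a_i,b_i,c_i)$, together with the local behavior at $\lambda=0$, then identifies $\pi_i$ uniquely as the asserted ${}_2F_1$.
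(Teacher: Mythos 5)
Your proposal is correct, but it takes a genuinely different route from the paper. The paper never evaluates an integral: following Clemens, it differentiates each $\omega_i$ under the integral sign with respect to $\lambda$, exhibits a linear combination of $\omega_i$, $\partial_\lambda\omega_i$, $\partial_\lambda^2\omega_i$ that is an exact differential, integrates over a cycle (killing the exact form) to obtain a Picard--Fuchs operator $F_i$, matches it against the hypergeometric equation \eqref{eqn:hypdiffeq}, and takes $\pi_i$ to be the resulting ${}_2F_1$ --- in other words, what you relegate to a final ``check'' is the paper's entire proof, and the paper is no more explicit than you are about which cycle realizes the solution holomorphic at $\lambda=0$. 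Your Euler-integral route (essentially the Barman--Kalita computation extended to all three differentials) actually delivers more, provided you commit to the right contour: with $x=1/u$, the Pochhammer-regularized path from $1$ to $\infty$ turns $\omega_1,\omega_2,\omega_3$ into Euler integrands with $(\gamma,\alpha,\alpha+\beta)=(\tfrac34,\tfrac14,\tfrac12)$, $(\tfrac12,\tfrac12,1)$, $(\tfrac34,\tfrac54,\tfrac32)$, giving exactly the stated series up to the Beta-factor normalization you note. By contrast, your default substitution $x=\lambda t$ on the cycle over $[0,\lambda]$ yields for $\omega_1$ and $\omega_3$ the companion solutions $\lambda^{1/2}\,{}_2F_1(\tfrac34,\tfrac54;\tfrac32\mid\lambda)$ and $\lambda^{-1/2}\,{}_2F_1(\tfrac14,\tfrac34;\tfrac12\mid\lambda)$ respectively (note that the two parameter sets swap between the differentials), so the caveat in your last paragraph is essential rather than optional, and naming the $\{1,\infty\}$ contour up front would close the one soft spot in your write-up. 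As for what each approach buys: yours pins down explicit contours, exact constants, and which solution each concrete cycle computes; the paper's produces the operators $F_i$ themselves, which is what Section \ref{subsec:PointCount} actually needs --- Manin's theorem transfers the differential relations, not the integrals, to the Hasse--Witt matrix --- and it sidesteps all phase and branch bookkeeping, since exact forms integrate to zero over closed cycles.
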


\begin{proof}

As noted in Section \ref{sec:HasseWitt},  we can write each $\omega_i$ locally at a distinct point $P_i$ on the curve.  We compute the periods $\pi_1, \pi_2,\pi_3$ of $C^4_{\lambda}$ by integrating each differential $\omega_i$ over a cycle in ${H^1(C^4_{\lambda},\mathscr O)}$ that contains the point $P_i$ and not the other $P_j$. Such a cycle exists since the chosen points are distinct.\\

We follow the work of Clemens \cite[Section 2.10]{Clemens} to find differential equations satisfied by the periods and then give combinatorial expressions for them. We show the computation for $\pi_3$ and omit the work for the remaining periods. Starting with the differential $$\omega_3=\frac{dx}{y^3}=(x(x-1)(x-\lambda))^{-3/4}dx,$$ we take derivatives with respect to $\lambda$ to get
\begin{eqnarray*}
  \frac{\partial}{\partial \lambda}((x(x-1)(x-\lambda))^{-3/4})&= -\frac34x^{-3/4}(x-1)^{-3/4}(x-\lambda)^{-7/4}\\\\
  \frac{\partial^2}{\partial \lambda^2}((x(x-1)(x-\lambda))^{-3/4})&= \frac{21}{16}x^{-3/4}(x-1)^{-3/4}(x-\lambda)^{-11/4}.
  \end{eqnarray*}

We wish to find a linear combination of $\omega_3$ and its derivatives that gives an exact differential. To do this, we rewrite the following differential
\begin{align*}
   d\left(\frac{x^{1/4}(x-1)^{1/4}(x-\lambda)^{1/4}}{(x-\lambda)^2}\right)&=d\left(x^{1/4}(x-1)^{1/4}(x-\lambda)^{-7/4}\right)\\
   &=\left[\frac14x^{-3/4}(x-1)^{1/4}(x-\lambda)^{-7/4}\right.\\
   &\hspace{.3in}\left.+\frac14x^{1/4}(x-1)^{-3/4}(x-\lambda)^{-7/4}-\frac74x^{1/4}(x-1)^{1/4}(x-\lambda)^{-11/4}\right]\\
   &=\frac13(x-1)\frac{d\omega_3}{d\lambda}+\frac13x\frac{d\omega_3}{d\lambda}-\frac43x(x-1)\frac{d^2\omega_3}{d\lambda^2}\\
   &= -\frac54\omega_3-2(2\lambda+1)\frac{d\omega_3}{d\lambda}-\frac43\lambda(\lambda-1)\frac{d^2\omega_3}{d\lambda^2}.
  \end{align*}
By integrating both sides and then multiplying by 3/4, we see that $\pi_3$ satisfies $F_3\pi_3=0$, where
\begin{equation}\label{de3}
 F_3=-\frac{15}{16}+(3/2-3\lambda)\frac{d}{d\lambda}+\lambda(1-\lambda)\frac{d^2}{d\lambda^2}.
\end{equation}

Note that this is a hypergeometric differential equation. We solve for $a,b,c$ in Equation \ref{eqn:hypdiffeq} and find that $a=3/4, b=5/4$ (or vice versa) and $c=3/2$. This gives us the following expression for the period
$$\pi_3={}_2F_1\left(\left.\begin{array}{cc}
                3/4&5/4\\
		{}&3/2
               \end{array}\right|\lambda\right).$$

Similarly, we find that $\pi_1$ satisfies $F_1\pi_1=0$, where
\begin{equation}
 F_1=-\frac{3}{16}+(1/2-2\lambda)\frac{d}{d\lambda}+\lambda(1-\lambda)\frac{d^2}{d\lambda^2}
\end{equation}
and can be expressed as
$$\pi_1={}_2F_1\left(\left.\begin{array}{cc}
                1/4&3/4\\
		{}&1/2
               \end{array}\right|\lambda\right),$$
and that $\pi_2$ satisfies $F_2\pi_2=0$, where
\begin{equation}
 F_2=-\frac{1}{4}+(1-2\lambda)\frac{d}{d\lambda}+\lambda(1-\lambda)\frac{d^2}{d\lambda^2}
\end{equation}
and can be expressed as
$$\pi_2={}_2F_1\left(\left.\begin{array}{cc}
                1/2&1/2\\
		{}&1
               \end{array}\right|\lambda\right).$$

\end{proof}
              
\subsection{Point Count}\label{subsec:PointCount}
In this section we will compute the number of points on the curve $C^4_\lambda$ in two ways. We first specify the work in Section \ref{sec:HasseWitt} to the curve $C^4_\lambda$. Then, we will compute the number of points using character sums.\\

Recall Manin's result (see Theorem \ref{thm:Manin} of this paper) that tells us that the rows of the Hasse-Witt matrix satisfy every differential equation satisfied by the periods of a curve. In Section \ref{sec:HasseWitt} we specifically chose bases for the spaces of differentials and cycles that were dual to each other, which results in a diagonal Hasse-Witt matrix. Thus, the sum of the rows of the Hasse-Witt matrix is exactly the trace of the matrix, which in this case is the trace of Frobenius. Hence, the trace of Frobenius must satisfy the same differential equations as the periods.\\

Moreover, since the space of differentials is 3-dimensional and we have developed differential equations for three $\mathbb C$-linearly independent periods, it must be the case that the trace of Frobenius is a $\mathbb C$-linear combination of the periods:
$$\text{tr}[f^*|_{H^1(M,\mathscr O)}]\equiv \sum _{i=1}^3 c_{i,p}(\lambda) \pi_{i} \pmod{p},$$
where each $ c_{i,p}(\lambda)\in\mathbb C$ may depend on the order $p$ of the field and on the parameter $\lambda$ of the curve. Using Equation \ref{eqn:PointCount} we can conclude that
$$\text{number of $\mathbb F_p$-points on } C^4_\lambda\equiv -\sum _{i=1}^3 c_{i,p}(\lambda) \pi_{i} \pmod{p},$$
which we showed in Section \ref{subsec:Period} is $\mathbb C$-linear combination  of classical hypergeometric series. In fact each of the classical hypergeometric series are congruent to truncated series when we reduce mod $p$. Thus, we have proved the following theorem.
\begin{thm}
\begin{align*}\#C^4_{\lambda}\equiv1-&c_{1,p}(\lambda){}_2F_1\left(\left.\begin{array}{cc}
                1/4&3/4\\
		{}&1/2
               \end{array}\right|\lambda\right)_{tr(p)}\\&-c_{2,p}(\lambda){}_2F_1\left(\left.\begin{array}{cc}
                1/2&1/2\\
		{}&1
               \end{array}\right|\lambda\right)_{tr(p)}-c_{3,p}(\lambda){}_2F_1\left(\left.\begin{array}{cc}
                3/4&5/4\\
		{}&3/2
               \end{array}\right|\lambda\right)_{tr(p)} \pmod{p},\end{align*}
where $\#C^4_{\lambda}$ is the number of $\mathbb F_p$-points plus the point at infinity and $ c_{i,p}(\lambda)\in\mathbb C$.
\end{thm}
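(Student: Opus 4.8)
The plan is to fuse the machinery of Sections \ref{sec:Lefschetz} and \ref{sec:HasseWitt} with the period computation of Theorem \ref{thm:GenLegendreCurvePeriod} into a single congruence. First I would invoke Equation \ref{eqn:PointCount}, which identifies the number of $\mathbb F_p$-points on $C^4_\lambda$ with $-\text{tr}[f^*|_{H^1(C^4_\lambda,\mathscr O)}]$; adjoining the point at infinity then gives $\#C^4_\lambda = 1 - \text{tr}[f^*|_{H^1(C^4_\lambda,\mathscr O)}]$. Because the bases $\{\omega_i\}$ and $\{h_i\}$ were chosen dual with respect to the residue pairing, Section \ref{sec:HasseWitt} shows the Hasse-Witt matrix is diagonal, so this trace is exactly the sum of its three diagonal entries $a_{1,p-1}(\lambda)$, $a_{2,p-1}(\lambda)$, $a_{3,p-1}(\lambda)$, each viewed as a function of $\lambda$.

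Next I would bring in Manin's Theorem \ref{thm:Manin}. In the proof of Theorem \ref{thm:GenLegendreCurvePeriod} each period $\pi_i$ was shown to satisfy a second-order hypergeometric equation $F_i\pi_i = 0$, obtained precisely by exhibiting a $\mathbb C$-linear combination of $\omega_i$ and its $\lambda$-derivatives as an exact differential (see Equation \ref{de3}). This is a differential relation satisfied by the basis differential $\omega_i$ modulo complete differentials, so Manin's theorem forces the corresponding row of the Hasse-Witt matrix—here the single diagonal entry $a_{i,p-1}(\lambda)$—to satisfy $F_i$ as well, now as a congruence modulo $p$.

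I would then run a solution-space argument on each $F_i$ in turn. Each $F_i$ is a second-order hypergeometric operator, so its solution space is two-dimensional, and its unique branch holomorphic at $\lambda = 0$ is $\pi_i$, whose reduction modulo $p$ is the truncated series ${}_2F_1(\,\cdots\,|\,\lambda)_{\text{tr}(p)}$. Since $a_{i,p-1}(\lambda)$ is itself a polynomial in $\lambda$ (it arises as a residue, hence a finite expansion) that solves $F_i$ modulo $p$, it must coincide with that holomorphic branch up to a scalar, yielding $a_{i,p-1}(\lambda) \equiv c_{i,p}(\lambda)\,{}_2F_1(\,\cdots\,|\,\lambda)_{\text{tr}(p)} \pmod p$ with $c_{i,p}(\lambda) \in \mathbb C$. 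Summing over $i = 1,2,3$ and substituting into $\#C^4_\lambda = 1 - \sum_i a_{i,p-1}(\lambda)$ produces the stated formula.

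The hard part will be the final identification: I must confirm that the polynomial Hasse-Witt entry $a_{i,p-1}$ is genuinely pinned down by the holomorphic branch, and in particular that the classical period $\pi_i$ reduces modulo $p$ to its truncation ${}_2F_1(\,\cdots\,|\,\lambda)_{\text{tr}(p)}$. For $\pi_1$ and $\pi_3$ the parameters carry denominator $4$, so the Pochhammer symbols $(1/2)_k$ and $(3/2)_k$ in the coefficients pick up a factor of $p$ once $k$ exceeds $(p-1)/2$, and I would need to check that the numerator Pochhammers supply compensating $p$-divisibility so that the tail $\sum_{k \geq p}$ vanishes modulo $p$ and the effective term count is truly $p$; this is a Dwork-style $p$-integrality analysis that will likely restrict attention to primes $p \equiv 1 \pmod 4$. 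The classical-to-finite-field congruence recorded in Theorem \ref{theorem:2F1congruence} supplies the template for exactly this kind of reduction and is the tool I would reach for to make the identification rigorous.
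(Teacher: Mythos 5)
Your argument is correct and shares the paper's skeleton: Equation \ref{eqn:PointCount} for the point count, the diagonal Hasse--Witt matrix from Section \ref{sec:HasseWitt}, Manin's Theorem \ref{thm:Manin} applied to the hypergeometric operators $F_1, F_2, F_3$, and truncation of the classical series modulo $p$. Where you genuinely diverge is the identification step. The paper stays at the level of the trace: it observes that the trace of Frobenius satisfies the periods' differential equations and, since the three periods are $\mathbb C$-linearly independent and the space of differentials is $3$-dimensional, concludes directly that the trace is congruent to $\sum_i c_{i,p}(\lambda)\pi_i$ modulo $p$, then truncates each $\pi_i$. You instead pin down each diagonal entry $a_{i,p-1}(\lambda)$ separately as a multiple of the solution of $F_i$ holomorphic at $\lambda=0$ --- a finer, row-by-row claim that is arguably more faithful to Manin's statement about \emph{rows} than the paper's trace-level dimension count (the trace, being a sum of solutions of three different second-order operators, does not literally satisfy each $F_i$ individually). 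One caveat in your version: ``coincides with the holomorphic branch up to a scalar'' is not quite right in characteristic $p$, where the constants of the derivation $d/d\lambda$ are $\mathbb F_p[\lambda^p]$, so a polynomial solution of $F_i$ modulo $p$ is determined only up to a factor in $\mathbb F_p[\lambda^p]$; since the theorem deliberately allows $\lambda$-dependent coefficients $c_{i,p}(\lambda)$, this ambiguity is absorbed harmlessly, but it should be stated. Your closing worry about $p$-integrality of the truncations of $\pi_1$ and $\pi_3$ is legitimate and is glossed over by the paper, which simply asserts that the series are congruent to their truncations; your instinct that it resolves for $p\equiv 1\pmod 4$ is sound (for $\pi_1$, the factor of $p$ entering the denominator via $(1/2)_k$ for $k>(p-1)/2$ is cancelled by the factor of $p$ entering $(1/4)_k$ for $k>(p-1)/4$), and is consistent with the hypothesis $q\equiv 1\pmod 4$ in Theorem \ref{thm:GenLegendreCurvePtCount}. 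In short, your route proves a slightly stronger per-entry statement at the cost of the characteristic-$p$ uniqueness caveat, while the paper's coarser linear-combination argument reaches the stated congruence more quickly.
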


This gives us the number of points on the curve modulo the order of the field we are working over. We have not solved for the coefficients $c_{i,p}(\lambda)$, though it is perhaps possible to using methods similar to Clemens' exposition on Legendre elliptic curves in \cite[Section 2.11]{Clemens}. Rather than go through this computation, we instead compute the exact number of points on $C^4_\lambda$ using character sums. 

\begin{thm}\label{thm:GenLegendreCurvePtCount}
 Let $q$ be a prime power such that $q\equiv 1 \pmod 4$. Let $T\in \widehat{\mathbb F_q^{\times}}$ be a generator of the character group and let $\psi=T^{\frac{q-1}{4}}$. Then
 $$\#C^4_{\lambda}=q+1+q\epsilon({\lambda})\sum_{m=1}^3\psi^m(-1)\cdot{}_2F_1\left(\left.\begin{array}{cc}
								    \psi^{-m}&\psi^{m}\\
								    {}&\psi^{2m}
								    \end{array}\right|{\lambda}\right)_{q}. $$
\end{thm}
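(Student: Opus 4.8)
The plan is to count the affine $\mathbb{F}_q$-points of $C^4_\lambda$ directly as a character sum and then recognize the resulting sums as Greene's finite field hypergeometric functions. Since $q\equiv 1\pmod 4$, the group $\widehat{\mathbb{F}_q^\times}$ contains the character $\psi=T^{(q-1)/4}$ of exact order $4$, and for $a\in\mathbb{F}_q$ the number of solutions of $u^4=a$ equals $\sum_{m=0}^3\psi^m(a)$, under the conventions $\psi^m(0)=0$ for $1\le m\le 3$ and the $m=0$ term contributing $1$ for every $a$. Writing $f(x)=x(x-1)(x-\lambda)$, I would express the number of affine points as
$$\sum_{x\in\mathbb{F}_q}\sum_{m=0}^3\psi^m(f(x))=q+\sum_{m=1}^3 S_m,\qquad S_m:=\sum_{x\in\mathbb{F}_q}\psi^m\big(x(x-1)(x-\lambda)\big),$$
where the $m=0$ term produces $q$ after accounting for the branch points $x\in\{0,1,\lambda\}$. (Careful bookkeeping of the convention for characters at $0$, and of the assumption that $0,1,\lambda$ are distinct, enters at this step.)

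Next I would identify each $S_m$ with a Greene hypergeometric function. The key guess is to take $A=\psi^{-m}$, $B=\psi^m$, $C=\psi^{2m}$ and evaluate the right-hand side of Greene's original definition in Equation \ref{eqn:2F1def} at argument $\lambda$. With these parameters the summand collapses to $\psi^m(y)\,\psi^m(1-y)\,\psi^m(1-\lambda y)$, giving
$${}_2F_1\left(\left.\begin{array}{cc}\psi^{-m}&\psi^m\\{}&\psi^{2m}\end{array}\right|\lambda\right)_q=\epsilon(\lambda)\,\frac{\psi^{3m}(-1)}{q}\sum_{y}\psi^m\big(y(1-y)(1-\lambda y)\big).$$
The substitution $y\mapsto 1/x$ (a bijection of $\mathbb{F}_q^\times$, with both the $y=0$ and $x=0$ terms vanishing) sends $y(1-y)(1-\lambda y)$ to $x^{-3}(x-1)(x-\lambda)$; since $\psi^{-3m}=\psi^m$ because $\psi^4=\epsilon$, the character value equals $\psi^m\big(x(x-1)(x-\lambda)\big)$. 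Hence the sum over $y$ is exactly $S_m$, and using $\psi^{3m}(-1)=\psi^m(-1)=\psi^m(-1)^{-1}$ (a value in $\{\pm1\}$) one solves to obtain $S_m=q\,\epsilon(\lambda)\,\psi^m(-1)\,{}_2F_1(\,\cdots\mid\lambda)_q$.

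Finally I would assemble the count. The projective model $Y^4=ZX(X-Z)(X-\lambda Z)$ has the single smooth $\mathbb{F}_q$-rational point $[1:0:0]$ at infinity (one checks that the relevant partial derivative is nonzero there), so $\#C^4_\lambda$ equals the affine count plus $1$. Substituting the formula for $S_m$ then yields precisely
$$\#C^4_\lambda=q+1+q\,\epsilon(\lambda)\sum_{m=1}^3\psi^m(-1)\,{}_2F_1\left(\left.\begin{array}{cc}\psi^{-m}&\psi^m\\{}&\psi^{2m}\end{array}\right|\lambda\right)_q.$$
I expect the main obstacle to be the bookkeeping in the middle step: correctly pinning down the parameters $A,B,C$, executing the $y\mapsto 1/x$ substitution, and tracking the normalizing factors $1/q$, $\epsilon(\lambda)$, and especially the sign $\psi^m(-1)$, while repeatedly invoking $\psi^4=\epsilon$ to reduce exponents modulo $4$ (for instance $\psi^{3m}=\psi^{-m}$ and $\psi^{-3m}=\psi^m$). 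Everything else reduces to orthogonality of characters and the routine smoothness check at infinity.
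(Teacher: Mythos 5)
Your proposal is correct and takes essentially the same route as the paper: both decompose the affine count via $\#\{y : y^4=a\}=1+\sum_{m=1}^3\psi^m(a)$, identify each character sum $S_m=\sum_x \psi^m\left(x(x-1)(x-\lambda)\right)$ with Greene's ${}_2F_1$ with parameters $\psi^{-m},\psi^m;\psi^{2m}$ through the original definition in Equation \ref{eqn:2F1def}, and add the single point at infinity. The only (harmless) deviation is that the paper first matches the sum to the definition at argument $1/\lambda$ and then passes to argument $\lambda$ by citing the inversion formula of Theorem \ref{thm:Greenetransform}, whereas you prove exactly that inversion by hand via the substitution $y\mapsto 1/x$ together with the reduction $\psi^{-3m}=\psi^m$, so your argument is self-contained at that step but otherwise identical.
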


\begin{rem*}
 This result may follow from \cite[Theorem 11]{Swisher2016}, though our equation for the generalized Legendre curve is written in a slightly different form. In \cite{Swisher2016}, the genus 3 generalized Legendre curve is written as
 $$y^4=x(1-x)(1-\lambda x).$$
 The resulting point count formulas are identical, so we should be able to find a transformation between the two curves.
\end{rem*}

\begin{proof}
To prove the result, we first express the number of points as a sum of characters over the finite field $\mathbb F_q$.
\begin{align*}
\#C^4_{\lambda}(\mathbb F_q)&=\sum_{x\in\mathbb F_q} \#\left\{y\in\mathbb F_q \left| y^4=x(x-1)(x-\lambda) \right. \right\}+1\\
			 &=\sum_{x\in\mathbb F_q-\{0,1,\lambda\}} \left(\sum_{m=0}^3 \psi^m(x(x-1)(x-\lambda))\right)+1+3\\
			 &=\sum_{x\in\mathbb F_q-\{0,1,\lambda\}} \epsilon(x(x-1)(x-\lambda))+\sum_{x\in\mathbb F_q-\{0,1,\lambda\}} \left(\sum_{m=1}^3 \psi^m(x(x-1)(x-\lambda))\right)+4\\
			 &=q-3+\sum_{x\in\mathbb F_q-\{0,1,\lambda\}} \left(\sum_{m=1}^3 \psi^m(x(x-1)(x-\lambda))\right)+4\\
\end{align*}

For each $m$ we have
\begin{align*}
 \sum_{x\in\mathbb F_q-\{0,1,\lambda\}}\psi^m(x(x-1)(x-\lambda))&=\sum_{x\in\mathbb F_q-\{0,1,\lambda\}}\psi^m(x)\psi^m(x-1)\psi^m(x-\lambda).\\
 \end{align*}
 We work to rewrite the summand and get
 \begin{align*}
								&=\sum_{x\in\mathbb F_q-\{0,1,\lambda\}}\psi^m(x)\psi^m(1-x)\psi^m(\lambda-x)\psi^m(-1)\psi^m(-1)\\
								&=\sum_{x\in\mathbb F_q-\{0,1,\lambda\}}\psi^m(x)\psi^m(1-x)\psi^m(1-\frac{1}{\lambda}x)\psi^m(\lambda)\\
								&=\psi^m(\lambda)\sum_{x\in\mathbb F_q-\{0,1,\lambda\}}\psi^m(x)\psi^m(1-x)\psi^m(1-\frac{1}{\lambda}x)\\
								&=\psi^m(\lambda)\sum_{x\in\mathbb F_q-\{0,1,\lambda\}}\psi^m(x)\psi^{-m}\psi^{2m}(1-x)\psi^m(1-\frac{1}{\lambda}x),
\end{align*}
which we recognize as being the hypergeometric function expression
\begin{align*}
								&=\psi^m(\lambda)\cdot\frac{q}{\epsilon\left(\frac{1}{\lambda}\right)\psi^{3m}(-1)}\cdot{}_2F_1\left(\left.\begin{array}{cc}
								    \psi^{-m}&\psi^{m}\\
								  {}&\psi^{2m}
								  \end{array}\right|\frac{1}{\lambda}\right)_{q}\\
								&=\psi^m(-\lambda)\cdot\frac{q}{\epsilon\left(\frac{1}{\lambda}\right)}\cdot{}_2F_1\left(\left.\begin{array}{cc}
								    \psi^{-m}&\psi^{m}\\
								    {}&\psi^{2m}
								    \end{array}\right|\frac{1}{\lambda}\right)_{q}.
\end{align*}
We use Theorem \ref{thm:Greenetransform} to write
\begin{align*}
{}_2F_1\left(\left.\begin{array}{cc}
								    \psi^{-m}&\psi^{m}\\
								    {}&\psi^{2m}
								    \end{array}\right|\frac{1}{\lambda}\right)_{q} &= \psi^{-m+m+2m}(-1)\psi^m\left(\frac{1}{\lambda}\right)\cdot{}_2F_1\left(\left.\begin{array}{cc}
								    \psi^{-m}&\psi^{-m+2m}\\
								    {}&\psi^{-m-m}
								    \end{array}\right|{\lambda}\right)_{q}\\
								    &= \psi^m\left(\frac{1}{\lambda}\right)\cdot{}_2F_1\left(\left.\begin{array}{cc}
								    \psi^{-m}&\psi^{m}\\
								    {}&\psi^{-2m}
								    \end{array}\right|{\lambda}\right)_{q}\\
								    &= \psi^m\left(\frac{1}{\lambda}\right)	\cdot{}_2F_1\left(\left.\begin{array}{cc}
								    \psi^{-m}&\psi^{m}\\
								    {}&\psi^{2m}
								    \end{array}\right|{\lambda}\right)_{q}.\\
\end{align*}
Thus, for each $m$ we have
\begin{align*}
 \psi^m(-\lambda)\cdot\frac{q}{\epsilon\left(\frac{1}{\lambda}\right)}\cdot{}_2F_1\left(\left.\begin{array}{cc}
								    \psi^{-m}&\psi^{m}\\
								    {}&\psi^{2m}
								    \end{array}\right|\frac{1}{\lambda}\right)_{q}&= \psi^m(-\lambda)\cdot\frac{q}{\epsilon\left(\frac{1}{\lambda}\right)}\cdot \psi^m\left(\frac{1}{\lambda}\right)\cdot{}_2F_1\left(\left.\begin{array}{cc}
								    \psi^{-m}&\psi^{m}\\
								    {}&\psi^{2m}
								    \end{array}\right|{\lambda}\right)_{q}\\
								    &= q\cdot \psi^m(-1)\epsilon({\lambda})\cdot{}_2F_1\left(\left.\begin{array}{cc}
								    \psi^{-m}&\psi^{m}\\
								    {}&\psi^{2m}
								    \end{array}\right|{\lambda}\right)_{q}.\\
\end{align*}

Putting this back into the formula for $\#C^4_{\lambda}$ gives

\begin{align*}
 \#C^4_{\lambda}(\mathbb F_q)&=q-3+\sum_{m=1}^3q\cdot \psi^m(-1)\epsilon({\lambda})\cdot{}_2F_1\left(\left.\begin{array}{cc}
								    \psi^{-m}&\psi^{m}\\
								    {}&\psi^{2m}
								    \end{array}\right|{\lambda}\right)_{q} +4\\
			&=q+1+q\epsilon({\lambda})\sum_{m=1}^3\psi^m(-1)\cdot{}_2F_1\left(\left.\begin{array}{cc}
								    \psi^{-m}&\psi^{m}\\
								    {}&\psi^{2m}
								    \end{array}\right|{\lambda}\right)_{q}. 
\end{align*}

\end{proof}

\subsection{Period - Point Count Connection}\label{sec:periodpointcount}
We notice two phenomena here that also occur when one computes periods and point counts for Legendre elliptic curves. The first is that, remarkably, the number of points on the curve can be expressed in terms of finite field hypergeometric functions with input given by $\lambda$. In fact we get equality, not just a congruence, between the number of points and a finite field hypergeometric expression.  This phenomenon also occurs for families of curves not expressible in Legendre form over $\mathbb Q$ (see, for example, \cite{FOP2, Fuselier10, Lennon1}). In fact, this phenomenon seems to extend to some higher dimensional Calabi-Yau manifolds as is shown in \cite{AhlgrenOno00a, GoodsonDwork2017, GoodsonDwork2017b, McCarthy2012a, RV1} leading us to wonder if this will be the case for a large class of algebraic varieties.\\

The second phenomenon is that in computing the point count in two different ways, we get a congruence between classical and finite field hypergeometric expressions. We can say a bit more on this: it seems as though we can identify congruences between particular summands that ``match''.  For example, we saw in Theorem \ref{thm:GenLegendreCurvePeriod} that one period of the curve $C^4_\lambda$ can be expressed as 
$$\pi_2={}_2F_1\left(\left.\begin{array}{cc}
                1/2&1/2\\
		{}&1
               \end{array}\right|\lambda\right).$$               
We saw in Theorem  \ref{thm:GenLegendreCurvePtCount} that one of the summands in the point count for $C^4_{\lambda}$ is
$$qT^{\frac{q-1}{2}}(-1){}_2F_1\left(\left.\begin{array}{cc}
                T^{\frac{q-1}{2}}&T^{\frac{q-1}{2}}\\
		{}&\epsilon
               \end{array}\right|\lambda\right)_{q}.$$               

Note that since $q\equiv 1 \pmod 4$, $T^{\frac{q-1}{2}}(-1)=1$. Theorem \ref{theorem:2F1congruence} then tells us that this expression is congruent modulo $p$ to the negative of the classical hypergeometric series $\pi_2$.  The classical and finite field hypergeometric expressions -- including the two that are not covered by Theorem \ref{theorem:2F1congruence} -- for the generalized Legendre curves ``match'' in the same way that period and trace of Frobenius expressions match for elliptic curves: we replace the fraction $\frac{a}{b}$ with a character of order $b$ raised to the $a$th power. This phenomenon also seems to extend to some other curves (see \cite{Mortenson2003a}) and to higher dimensional Calabi-Yau manifolds (see, for example, \cite{GoodsonDwork2017, GoodsonDwork2017b, Kilbourn2006, McCarthy2012a, Mortenson2005}. By testing values in Sage \cite{Sage}, we know that it is not the case that congruences exist between arbitrary (matching) truncated hypergeometric series and finite field hypergeometric functions. This leads us to wonder when we can expect to have a congruence between these two types of series.

\bibliographystyle{plain}
\bibliography{LegendreBib}

\end{document}